\documentclass[11pt,a4paper]{article}

\usepackage[a4paper,margin=29mm]{geometry}
\usepackage[T1]{fontenc}
\usepackage[utf8]{inputenc}
\usepackage{lmodern}
\usepackage{microtype}
\usepackage{amsmath,amssymb,amsthm,mathtools}
\usepackage{enumitem}
\usepackage{hyperref}

\hypersetup{
  colorlinks=true,
  linkcolor=black,
  citecolor=black,
  urlcolor=black,
  pdftitle={Exact output statistics of Icart's encoding in the exceptional \(j=0\) case},
  pdfauthor={David Kumallagov}
}

\theoremstyle{plain}
\newtheorem{theorem}{Theorem}[section]
\newtheorem{proposition}[theorem]{Proposition}

\newtheorem{corollary}[theorem]{Corollary}
\theoremstyle{remark}
\newtheorem{remark}[theorem]{Remark}

\DeclareMathOperator{\Col}{Col}

\newcommand{\Fq}{\mathbb F_q}
\newcommand{\Fqs}{\mathbb F_q^*}
\newcommand{\Ocal}{\mathcal O}
\newcommand{\TV}{\operatorname{TV}}
\newcommand{\Prob}{\mathbb P}
\newcommand{\Exp}{\mathbb E}

\title{Exact output statistics of Icart's encoding in the exceptional \(j=0\) case}
\author{David Kumallagov\\
\small Information Technologies and Programming Faculty, ITMO University\\ \small Saint Petersburg, Russia\\ \small \texttt{kumdavid95@gmail.com}}
\date{}

\begin{document}
\maketitle

\begin{abstract}
Icart's encoding is a classical deterministic map from finite fields to
elliptic curves and a basic ingredient in early hash-to-curve constructions.
We determine the exact one-output distribution of this map in the exceptional
\(j=0\) case.  More precisely, for
\[
  E_{0,b}:Y^2=X^3+b,\ q\equiv2\pmod3,
\]
we compute the complete fibre distribution of
\(f_{0,b}:\mathbb F_q\to E_{0,b}(\mathbb F_q)\).  This gives closed formulae
for the image size, total variation distance from uniform, collision
probability, power sums, entropy measures and basic batch statistics.  We also
derive the exact second moment of all nontrivial character sums of the output
distribution.  Via the Weil pairing, this becomes an exact energy formula for
pairing-character tests on the supersingular \(j=0\) family whose odd prime order subgroups have
embedding degree two.
\end{abstract}

\noindent\textbf{Keywords.} Elliptic curves over finite fields; Icart's map; hash-to-curve; collision probability; Weil pairing.

\noindent\textbf{MSC 2020:} 11G20, 11T71, 14G50, 94A60.

\section{Introduction}

Deterministic maps from finite fields to elliptic curves are basic components
in the arithmetic of hash-to-curve constructions. In the terminology codified
in RFC~9380, a map-to-curve algorithm sends field elements to curve points,
whereas uniform hash-to-curve constructions typically combine several such
outputs and apply the necessary subgroup operations \cite{RFC9380}. The
one-output distribution of a deterministic map is governed by its fibres:
the probability of a point is exactly the number of its preimages divided by
the size of the field. Exact fibre counts therefore give the most direct
description of the nonuniform distribution induced by a single map output.

Icart's map \cite{Icart} is a classical deterministic encoding for short
Weierstrass curves over fields with \(q\equiv2\pmod3\), where the cube map is
bijective. Icart-type maps subsequently played a central role in the
development of indifferentiable hashing to elliptic curves, from the original
construction based on such encodings to the later character-sum framework for
deterministic encodings \cite{BCIMRT,FFSTV}. The image size of Icart's map was
studied in \cite{FSV}, where the generic case \(a\ne0\) was shown to have
density \(5/8\), while the exceptional case \(a=0\) has density \(2/3\), both
up to \(O(\sqrt q)\) error terms.

This paper computes the exceptional case exactly. Let
\[
  E_{0,b}:Y^2=X^3+b,\qquad b\ne0,\qquad q\equiv2\pmod3,
\]
and let \(f_{0,b}:\mathbb F_q\to E_{0,b}(\mathbb F_q)\) be Icart's original
map. We determine the number of curve points with \(0,1,2\) and \(3\)
preimages under \(f_{0,b}\), taking into account the special convention
\(f_{0,b}(0)=\mathcal O\). This yields the exact fibre enumerator of the map.

Let \(\chi\) be the quadratic character of \(\mathbb F_q\), extended by
\(\chi(0)=0\), and put
\[
  \alpha=\chi(-6b),\qquad
  \beta=\chi(-2b),\qquad
  \varepsilon=\frac{1-\alpha}{2},\qquad
  \delta=\frac{1+\alpha}{2}.
\]
If \(M_j\) denotes the number of affine points of \(E_{0,b}(\mathbb F_q)\)
with exactly \(j\) preimages, then our main theorem gives
\[
  M_0=\frac{q+1}{3}+\varepsilon,\qquad
  M_1=\frac{q-2-\beta}{2}-\varepsilon,
\]
\[
  M_2=1+\beta+\delta,\qquad
  M_3=\frac{q-2-3\beta}{6}-\delta,
\]
and the point at infinity has one preimage. In particular,
\[
  \#\operatorname{Im}(f_{0,b})
  =
  \frac{2(q+1)}3-\frac{1-\chi(-6b)}2.
\]
Thus the \(O(\sqrt q)\) estimate in the exceptional case is replaced by an
exact formula.

The same fibre enumerator determines the main statistical invariants of one
Icart output. If \(\mu_b\) is the distribution of \(f_{0,b}(U)\) for a uniform
\(U\in\mathbb F_q\), and \(\nu_b\) is the uniform distribution on
\(E_{0,b}(\mathbb F_q)\), then
\[
  \|\mu_b-\nu_b\|_{\rm TV}
  =
  \frac{2q+5-3\chi(-6b)}{6(q+1)}.
\]
We also obtain closed formulae for the collision probability, all integral
power sums, entropy measures and basic batch statistics.

Finally, the exceptional family \(Y^2=X^3+b\), \(q\equiv2\pmod3\), is the standard supersingular \(j=0\) family whose odd prime order subgroups
have embedding degree two. This allows the
Fourier statistics of \(\mu_b\) to be interpreted through Weil-pairing
characters. We compute the exact second moment of all nontrivial character
sums; equivalently, we obtain an exact energy formula for the corresponding
embedding-degree-two pairing tests. Recent work on pairing computation for
embedding-degree-two curves provides additional cryptographic context for this
interpretation \cite{ZhengLinZhao2026}.

\section{Icart's map and the inverse cubic}
\label{sec:icart}

Let $q=p^n$ with $p\ge5$, and assume $q\equiv2\pmod3$.  Then the cube map on
$\Fq$ is a bijection.  Let
\[
  E_{a,b}:\quad Y^2=X^3+aX+b, \qquad 4a^3+27b^2\ne0,
\]
be an elliptic curve over $\Fq$.  For $u\ne0$, Icart's map is
\begin{equation}\label{eq:icart-map}
  v=\frac{3a-u^4}{6u}, \qquad
  x=\left(v^2-b-\frac{u^6}{27}\right)^{1/3}+\frac{u^2}{3},
  \qquad
  y=ux+v,
\end{equation}
and $f_{a,b}(0)=\Ocal$.  The formula $y=ux+v$ is the equation of the auxiliary
line used in the construction; the map itself is not linear.

The inverse-polynomial description is due to Icart and is used in the form
recorded by Farashahi--Shparlinski--Voloch \cite{Icart,FSV}.  For an affine
point $P=(x,y)\in E_{a,b}(\Fq)$ set
\begin{equation}\label{eq:quartic-H}
  H_{a,b}(P,U)=U^4-6xU^2+6yU-3a.
\end{equation}
If $a=0$, this quartic has the formal factor $U$, and after removing it one
obtains the cubic
\begin{equation}\label{eq:cubic-K}
  K_b(P,U)=U^3-6xU+6y.
\end{equation}
Then
\begin{equation}\label{eq:inverse-criterion}
  \#f_{0,b}^{-1}(P)=\#\{u\in\Fqs:K_b(P,u)=0\}
\end{equation}
for affine points $P\in E_{0,b}(\Fq)$, while $f_{0,b}^{-1}(\Ocal)=\{0\}$.  The
exclusion of $u=0$ in \eqref{eq:inverse-criterion} is essential: in Icart's
original map the input $u=0$ is sent to $\Ocal$, not to an affine point.

\section{Exact fibre counts for \texorpdfstring{$a=0$}{a=0}}
\label{sec:exact-a0}

From now on let
\[
  E_{0,b}:\quad Y^2=X^3+b,
  \qquad b\ne0.
\]
Since cubing is a bijection of $\Fq$, for each $y\in\Fq$ there is a unique
$x\in\Fq$ such that $x^3=y^2-b$.  Hence
\begin{equation}\label{eq:a0-cardinality}
  \#E_{0,b}(\Fq)_{\rm aff}=q,
  \qquad
  \#E_{0,b}(\Fq)=q+1.
\end{equation}

For an affine point $P=(x,y)$ write
\[
  K_P(U)=U^3-6xU+6y.
\]
Let $n_i$ be the number of affine points $P\in E_{0,b}(\Fq)$ for which $K_P$
has exactly $i$ distinct roots in $\Fq$, where the root $0$, if present, is
included.  The cubic $K_P$ never has a triple root because $b\ne0$.

\begin{proposition}\label{prop:n_i}
Let $\beta=\chi(-2b)$.  Then
\begin{align*}
  n_0&=\frac{q+1}{3}, &
  n_1&=\frac{q-2-\beta}{2},\\
  n_2&=1+\beta, &
  n_3&=\frac{q-2-3\beta}{6}.
\end{align*}
\end{proposition}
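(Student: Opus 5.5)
The plan is to pin down $n_0,\dots,n_3$ through four linear relations: two counting identities, a direct computation of $n_2$ from the discriminant, and a character sum via Stickelberger's parity theorem. Throughout, affine points $P=(x,y)$ are parametrized by $y\in\Fq$, with $x$ the unique cube root of $y^2-b$; this uses \eqref{eq:a0-cardinality}.

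\emph{Relation 1 (number of points).} Every affine point is counted in exactly one $n_i$, so
\[
  n_0+n_1+n_2+n_3=q .
\]

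\emph{Relation 2 (total number of roots).} The sum $\sum_i i\,n_i$ counts pairs $(P,u)$ with $u\in\Fq$ and $K_P(u)=0$. I split by whether $u=0$.
\begin{itemize}[label={}, leftmargin=0pt]
\item For $u\ne0$, \eqref{eq:inverse-criterion} shows these pairs are exactly the pairs $(f_{0,b}(u),u)$. Since $f_{0,b}(u)$ is affine for $u\neq 0$, there are $q-1$ of them.
\item The root $u=0$ occurs iff $y=0$. That is the single point $(x,0)$ with $x^3=-b$.
\end{itemize}
Hence
\[
  n_1+2n_2+3n_3=q .
\]

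\emph{Relation 3 (repeated roots, giving $n_2$).} The discriminant of $U^3-6xU+6y$ is
\[
  -4(-6x)^3-27(6y)^2=108(8x^3-9y^2).
\]
Substituting $x^3=y^2-b$ gives
\[
  \operatorname{disc}(K_P)=-108\,(y^2+8b).
\]
A repeated root occurs iff $y^2=-8b$. No triple root is possible, so each such $P$ has exactly two distinct roots. There are $1+\chi(-8b)=1+\chi(-2b)=1+\beta$ such $y$ (note $y\ne0$ since $b\ne0$). This gives $n_2=1+\beta$ directly.

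\emph{Relation 4 (parity via Stickelberger).} For a squarefree cubic over $\Fq$ with $q$ odd, Stickelberger's theorem gives $\chi(\operatorname{disc})=(-1)^{3-r}$, where $r$ is the number of irreducible factors. This equals $+1$ for $0$ or $3$ roots and $-1$ for exactly one root. Points with zero discriminant contribute $0$. Therefore
\[
  n_0-n_1+n_3=\sum_{y\in\Fq}\chi\bigl(-108(y^2+8b)\bigr)=\chi(-3)\sum_{y}\chi(y^2+8b).
\]
The standard evaluation $\sum_y\chi(y^2+c)=-1$ for $c\ne0$ applies. Also $\chi(-3)=-1$, because $-3$ is a square in $\Fq$ iff $\Fq$ contains primitive cube roots of unity, iff $q\equiv1\pmod 3$. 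So
\[
  n_0-n_1+n_3=1 .
\]

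\emph{Solving.} With $n_2$ known, Relations 1, 2 and 4 form a $3\times3$ linear system in $n_0,n_1,n_3$. Its solution should be the stated values. As a consistency check on Relation 4,
\[
  n_0+n_3=\frac{q-\beta}{2},
\]
and subtracting $n_1$ gives $1$, as required.

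The main obstacle is Relation 4. It needs the correct sign convention in Stickelberger's theorem, the computation of $\chi(-3)$ under $q\equiv2\pmod3$ for prime powers, and care that the zero-discriminant points contribute $0$ to the sum. The remaining steps are bookkeeping.
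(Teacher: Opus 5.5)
Your proof is correct and follows essentially the paper's route. The point count, the root-pair count $n_1+2n_2+3n_3=q$ and $n_2=1+\beta$ from $y^2=-8b$ are all as in the paper. Your signed Stickelberger relation $n_0-n_1+n_3=1$ is the paper's count $2n_1+n_2=q-1$ of triples on $y^2=x^3+b$, $s^2=x^3+9b$, combined with the point count and rewritten through $\sum_y\chi(y^2+8b)=-1$. The one real difference is that you get Relation 2 from the inverse criterion plus the special point $(x_0,0)$, where the paper solves for $z=x-u^2/3$ directly.

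The only loose end is that you should actually solve the system rather than say its solution ``should be'' the stated values. It is nonsingular, and Relations 1 and 4 give $n_1=(q-2-\beta)/2$; then Relation 2 gives $n_3=(q-2-3\beta)/6$, and so $n_0=(q+1)/3$.
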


\begin{proof}
First,
\begin{equation}\label{eq:sum-ni}
  n_0+n_1+n_2+n_3=q.
\end{equation}
Count pairs $(P,u)$ with $P\in E_{0,b}(\Fq)_{\rm aff}$, $u\in\Fq$, and
$K_P(u)=0$.  On the one hand, the number of such pairs is
$n_1+2n_2+3n_3$.  On the other hand, for fixed $u\in\Fq$ put $z=x-u^2/3$.
The equations $K_P(u)=0$ and $y^2=x^3+b$ give
\[
  y=uz+\frac{u^3}{6},
  \qquad
  z^3=-b-\frac{u^6}{108}.
\]
For each $u$, this has a unique solution $z$, and hence a unique affine point
$P$.  Thus
\begin{equation}\label{eq:pair-count-1}
  n_1+2n_2+3n_3=q.
\end{equation}
Subtracting \eqref{eq:sum-ni} gives
\begin{equation}\label{eq:n0-n2-n3}
  n_0=n_2+2n_3.
\end{equation}

Next put $R(x)=x^3+9b$.  The discriminant of $K_P(U)=U^3-6xU+6y$ is
\begin{equation}\label{eq:disc-cubic}
  \operatorname{disc} K_P=-108\,R(x).
\end{equation}
Since $q\equiv2\pmod3$, the element $-3$ is not a square in $\Fq$, and hence
$-108$ is a nonsquare.  For a separable cubic over an odd finite field, the
discriminant is a square if and only if the Frobenius permutation of its roots
is even.  It follows that the points for which $K_P$ has exactly one root are
precisely those for which $R(x)$ is a nonzero square, while points with exactly
two distinct roots are precisely those with $R(x)=0$.

Count triples $(x,y,s)\in\Fq^3$ satisfying
\[
  y^2=x^3+b,
  \qquad
  s^2=x^3+9b.
\]
Subtracting gives $(s-y)(s+y)=8b$.  Since $b\ne0$, set $r=s-y\in\Fqs$.  Then
\[
  s=\frac{r+8b/r}{2},
  \qquad
  y=\frac{8b/r-r}{2},
\]
and after $y$ is known there is a unique $x=(y^2-b)^{1/3}$.  Hence the number
of triples is $q-1$.  In terms of the $n_i$, this number is
\begin{equation}\label{eq:pair-count-2}
  2n_1+n_2=q-1.
\end{equation}
Equations \eqref{eq:sum-ni}, \eqref{eq:pair-count-1} and
\eqref{eq:pair-count-2} imply $n_0=(q+1)/3$.

Finally, $n_2$ is the number of affine points for which $R(x)=0$.  The equation
$R(x)=0$ has one solution $x$, and for this $x$ the curve equation gives
$y^2=-8b$.  Therefore
\[
  n_2=1+\chi(-8b)=1+\chi(-2b)=1+\beta.
\]
The formulae for $n_1$ and $n_3$ follow from \eqref{eq:pair-count-2} and
\eqref{eq:n0-n2-n3}.
\end{proof}

We now pass from roots of $K_P$ to true fibres of Icart's map.  Let
\[
  r(P)=\#f_{0,b}^{-1}(P),\qquad P\in E_{0,b}(\Fq),
\]
and, for $j=0,1,2,3$, let
\[
  M_j=\#\{P\in E_{0,b}(\Fq)_{\rm aff}:r(P)=j\}.
\]
Keep the notation
\begin{equation}\label{eq:alpha-beta}
  \alpha=\chi(-6b),
  \qquad
  \beta=\chi(-2b),
  \qquad
  \varepsilon=\frac{1-\alpha}{2},
  \qquad
  \delta=\frac{1+\alpha}{2}.
\end{equation}

\begin{theorem}\label{thm:a0-fibres}
Let $q\equiv2\pmod3$, let $\operatorname{char}\Fq\ge5$, and let
$E_{0,b}:Y^2=X^3+b$ with $b\ne0$.  Then $r(\Ocal)=1$ and
\begin{align}
  M_0&=\frac{q+1}{3}+\varepsilon, \label{eq:M0}\\
  M_1&=\frac{q-2-\beta}{2}-\varepsilon, \label{eq:M1}\\
  M_2&=1+\beta+\delta, \label{eq:M2}\\
  M_3&=\frac{q-2-3\beta}{6}-\delta. \label{eq:M3}
\end{align}
\end{theorem}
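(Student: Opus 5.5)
The plan is to derive Theorem~\ref{thm:a0-fibres} from Proposition~\ref{prop:n_i}. The two counts differ only in whether the root $u=0$ of $K_P$ is counted, so I will find every affine point for which $K_P(0)=0$ and correct the counts $n_i$ there. First, $r(\Ocal)=1$: by definition $f_{0,b}(0)=\Ocal$, and for $u\ne0$ the formulae \eqref{eq:icart-map} always give an affine point, so $f_{0,b}^{-1}(\Ocal)=\{0\}$. For affine $P$, the criterion \eqref{eq:inverse-criterion} says that $r(P)$ is the number of distinct roots of $K_P$ in $\Fq$ minus $1$ if $0$ is a root, and equal to that number otherwise.

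Next, since $K_P(0)=6y$, the root $0$ occurs exactly when $y=0$. The curve equation then gives $x^3=-b$. Because cubing is bijective, there is exactly one such affine point, $P_0=(x_0,0)$ with $x_0=(-b)^{1/3}\ne0$. For every $P\ne P_0$ we have $r(P)$ equal to the number of distinct roots of $K_P$. Hence $M_j=n_j$ except for the single adjustment coming from $P_0$.

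For $P_0$ we have $K_{P_0}(U)=U(U^2-6x_0)$. Since $6x_0\ne0$, the factor $U^2-6x_0$ has $1+\chi(6x_0)$ roots in $\Fq$, none equal to $0$. To identify $\chi(6x_0)$, note that $x_0^3=-b$ gives $\chi(x_0)=\chi(x_0)^3=\chi(-b)$, so $\chi(6x_0)=\chi(-6b)=\alpha$. This gives two cases.
\begin{itemize}
\item If $\alpha=1$, then $K_{P_0}$ has three distinct roots but $r(P_0)=2$. So $P_0$ moves from the class counted by $n_3$ to $M_2$, giving $M_2=n_2+1$ and $M_3=n_3-1$.
\item If $\alpha=-1$, then $K_{P_0}$ has the single root $0$ and $r(P_0)=0$. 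So $P_0$ moves from $n_1$ to $M_0$, giving $M_0=n_0+1$ and $M_1=n_1-1$.
\end{itemize}
Both cases are encoded by $M_0=n_0+\varepsilon$, $M_1=n_1-\varepsilon$, $M_2=n_2+\delta$ and $M_3=n_3-\delta$. Substituting the values of Proposition~\ref{prop:n_i} yields \eqref{eq:M0}--\eqref{eq:M3}.

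The only delicate step is evaluating $\chi(6x_0)$ through the cube root, which is handled by the multiplicativity of $\chi$ together with $\chi^3=\chi$ on $\Fqs$. As a consistency check, the totals are preserved: $\sum_j M_j=q$ and $\sum_j jM_j=q-1$, the latter because the input $u=0$ is sent to $\Ocal$.
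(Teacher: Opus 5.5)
Your proposal is correct and follows essentially the same route as the paper: isolate the unique affine point $P_0=(x_0,0)$ at which $0$ is a root of $K_P$, compute $\chi(6x_0)=\chi(-6b)$ via $\chi(x_0)=\chi(x_0^3)$, and move $P_0$ from the $n_1$ class to $M_0$ or from the $n_3$ class to $M_2$ before substituting Proposition~\ref{prop:n_i}. Your explicit justification of $r(\Ocal)=1$ and the consistency check $\sum_j jM_j=q-1$, which uses $\varepsilon+\delta=1$, are correct additions that the paper does not spell out.
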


\begin{proof}
By \eqref{eq:inverse-criterion}, the affine preimages of a point $P$ are the
nonzero roots of $K_P$.  The only affine point for which $0$ is a root is
\[
  P_0=(x_0,0),\qquad x_0^3=-b.
\]
For this point,
\[
  K_{P_0}(U)=U(U^2-6x_0),
\]
and
\[
  \chi(6x_0)=\chi(6)\chi(x_0)=\chi(6)\chi(x_0^3)=\chi(-6b)=\alpha.
\]
If $\alpha=-1$, then $K_{P_0}$ has the single root $0$, so $P_0$ moves from the
class counted by $n_1$ to the true fibre class $M_0$.  If $\alpha=1$, then
$K_{P_0}$ has the three roots $0,\pm\sqrt{6x_0}$, so $P_0$ moves from the class
counted by $n_3$ to the true fibre class $M_2$.  All other affine points have
the same true fibre size as the number of roots of $K_P$.  Hence
\[
  M_0=n_0+\varepsilon,
  \quad
  M_1=n_1-\varepsilon,
  \quad
  M_2=n_2+\delta,
  \quad
  M_3=n_3-\delta,
\]
and Proposition~\ref{prop:n_i} gives the displayed formulae.
\end{proof}

We write
$\mathcal I_{0,b}=f_{0,b}(\mathbb F_q)
  \subseteq E_{0,b}(\mathbb F_q)$
for the image of Icart's map.

\begin{corollary}\label{cor:a0-image}
Under the hypotheses of Theorem~\ref{thm:a0-fibres},
\begin{equation}\label{eq:a0-image-exact}
  \#\mathcal I_{0,b}
  =\frac{2(q+1)}3-\frac{1-\chi(-6b)}2.
\end{equation}
Consequently,
\begin{equation}\label{eq:a0-error-one}
  \left|\#\mathcal I_{0,b}-\frac23\#E_{0,b}(\Fq)\right|\le1.
\end{equation}
Moreover,
\begin{equation}\label{eq:five-eighths-false}
  \left|\#\mathcal I_{0,b}-\frac58\#E_{0,b}(\Fq)\right|
  =\left|\frac{q+1}{24}-\frac{1-\chi(-6b)}2\right|,
\end{equation}
so the coefficient $5/8$ cannot hold uniformly in the case $a=0$ with an
$O(\sqrt q)$ error term.
\end{corollary}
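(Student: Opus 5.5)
The image consists of $\Ocal$ together with the affine points having at least one preimage. I will therefore count it as
\[
\#\mathcal I_{0,b}=1+(q-M_0)=1+M_1+M_2+M_3.
\]
The plan is simply to read everything off Theorem~\ref{thm:a0-fibres}; the only care needed is with the bookkeeping of the point at infinity.

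For \eqref{eq:a0-image-exact}, I use $r(\Ocal)=1$, so $\Ocal$ lies in the image. The affine points number $q$ by \eqref{eq:a0-cardinality}. Substituting $M_0=(q+1)/3+\varepsilon$ with $\varepsilon=(1-\chi(-6b))/2$ gives
\[
\#\mathcal I_{0,b}=q+1-\frac{q+1}{3}-\varepsilon=\frac{2(q+1)}{3}-\frac{1-\chi(-6b)}{2}.
\]
As a cross-check, one can instead sum $1+M_1+M_2+M_3$ from \eqref{eq:M1}--\eqref{eq:M3}. The $\beta$ terms cancel, since $-\beta/2+\beta-\beta/2=0$, and $\delta-\varepsilon=\alpha$ fixes the constant term.

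For \eqref{eq:a0-error-one}, I use $\#E_{0,b}(\Fq)=q+1$, which gives
\[
\#\mathcal I_{0,b}-\tfrac23\#E_{0,b}(\Fq)=-\varepsilon\in\{0,-1\}.
\]
For \eqref{eq:five-eighths-false}, I compute $\tfrac23-\tfrac58=\tfrac1{24}$, so the difference equals $(q+1)/24-\varepsilon$, and taking absolute values gives the stated identity. Since $\varepsilon\le1$, this quantity grows linearly in $q$. It therefore exceeds $C\sqrt q$ for any fixed constant $C$ once $q$ is large, which excludes a uniform $5/8$ law with $O(\sqrt q)$ error.

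There is no real obstacle here. The one point to watch is including $\Ocal$ exactly once in the image count: it is not among the $M_j$, which count only affine points, but $r(\Ocal)=1$ puts it in the image.
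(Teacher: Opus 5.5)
Your argument is correct and matches the paper's proof: count the image as $1+q-M_0$ using $r(\Ocal)=1$, substitute $M_0$ from Theorem~\ref{thm:a0-fibres}, and deduce the other two claims from $\#E_{0,b}(\Fq)=q+1$. One small slip in your optional cross-check: in $M_1+M_2+M_3$ the $\delta$ terms from $M_2$ and $M_3$ simply cancel, leaving $-\varepsilon$, so the identity $\delta-\varepsilon=\alpha$ plays no role there.
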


\begin{proof}
The affine points outside the image are exactly the $M_0$ points with no
preimage.  Therefore
\[
  \#\mathcal I_{0,b}=1+q-M_0=\frac{2(q+1)}3-\varepsilon.
\]
The remaining assertions follow from \eqref{eq:a0-cardinality}.
\end{proof}

\section{Output statistics}
\label{sec:statistics}

Let $G=E_{0,b}(\Fq).$
Let $U$ be uniformly distributed in $\Fq$, and write
$\mu_b(P)=\Prob[f_{0,b}(U)=P],\ P\in G.$
Let $\nu_b$ be the uniform distribution on $G$.

For two probability distributions $\rho$ and $\sigma$ on $G$, we use the
total variation distance
\[
  \|\rho-\sigma\|_{\TV}
  =
  \frac12\sum_{P\in G}|\rho(P)-\sigma(P)|.
\]
Here the subscript $\TV$ stands for total variation. 

For a $G$-valued random variable $X$, we write $\mathcal L(X)$ for its law:
\[
  \mathcal L(X)(P)=\Prob[X=P],
  \quad
  P\in G.
\]
For probability distributions $\rho$ and $\sigma$ on $G$, their convolution is
defined by
\[
  (\rho*\sigma)(P)
  =
  \sum_{Q\in G}\rho(Q)\sigma(P-Q),
  \qquad
  P\in G.
\]
Finally, we write
$\Col(\rho)=\sum_{P\in G}\rho(P)^2.$

Put
\[
  A_0=M_0,
  \qquad
  A_1=M_1+1,
  \qquad
  A_2=M_2,
  \qquad
  A_3=M_3.
\]
Thus $A_j$ is the number of curve points, including $\Ocal$, with exactly $j$
preimages under $f_{0,b}$.

\begin{corollary}\label{cor:fibre-TV}
The fibre enumerator
\[
  W_b(T)=\sum_{P\in E_{0,b}(\Fq)}T^{r(P)}
\]
is
\begin{equation}\label{eq:fibre-enumerator}
  W_b(T)=A_0+A_1T+A_2T^2+A_3T^3,
\end{equation}
where
\begin{align*}
  A_0&=\frac{q+1}{3}+\varepsilon, &
  A_1&=1+\frac{q-2-\beta}{2}-\varepsilon,\\
  A_2&=1+\beta+\delta, &
  A_3&=\frac{q-2-3\beta}{6}-\delta.
\end{align*}
Moreover,
\begin{equation}\label{eq:a0-TV}
  \|\mu_b-\nu_b\|_{\TV}
  =\frac{2q+5-3\chi(-6b)}{6(q+1)}.
\end{equation}
Equivalently, the distance is $1/3$ if $\chi(-6b)=1$, and it is
$1/3+1/(q+1)$ if $\chi(-6b)=-1$. 
\end{corollary}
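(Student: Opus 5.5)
The enumerator \eqref{eq:fibre-enumerator} is bookkeeping from Theorem~\ref{thm:a0-fibres}. Every affine point has $r(P)\in\{0,1,2,3\}$, since $K_P$ is a cubic. By Theorem~\ref{thm:a0-fibres} the point $\Ocal$ has $r(\Ocal)=1$. Grouping the $q+1$ points of $E_{0,b}(\Fq)$ by fibre size therefore gives $W_b(T)=A_0+A_1T+A_2T^2+A_3T^3$ with $A_j=M_j$ for $j\ne1$ and $A_1=M_1+1$. Substituting \eqref{eq:M0}--\eqref{eq:M3} yields the displayed coefficients.

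As a check I will verify $\sum_j A_j=q+1$ and $\sum_j jA_j=q$. The second identity holds because every input $u\in\Fq$ lands somewhere. Both follow from Proposition~\ref{prop:n_i} together with $\varepsilon+\delta=1$.

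For the total variation distance I use $\mu_b(P)=r(P)/q$ and $\nu_b(P)=1/(q+1)$. Then
\[
\|\mu_b-\nu_b\|_{\TV}=\sum_{P}\bigl(\nu_b(P)-\mu_b(P)\bigr)^+ .
\]
The sign of $r/q-1/(q+1)$ is negative exactly when $r=0$, because $1/q>1/(q+1)$. Hence the positive-part sum collects only the points with empty fibre:
\[
\|\mu_b-\nu_b\|_{\TV}=\frac{A_0}{q+1}=\frac{(q+1)/3+\varepsilon}{q+1}.
\]
Writing $\varepsilon=(1-\chi(-6b))/2$ gives
\[
\frac{2(q+1)+3-3\chi(-6b)}{6(q+1)}=\frac{2q+5-3\chi(-6b)}{6(q+1)}.
\]
The two special values follow by taking $\varepsilon=0$ or $\varepsilon=1$.

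There is no real obstacle. The only points needing care are that $\Ocal$ contributes to $A_1$ and not $A_0$, and that reducing the TV sum to the negative part is legitimate because both measures have total mass $1$. I will spell both out explicitly.
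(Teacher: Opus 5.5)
Your proposal is correct and matches the paper's proof: you read the enumerator off Theorem~\ref{thm:a0-fibres}, adding $\Ocal$ to the $r=1$ class, and you reduce the total variation distance to $A_0/(q+1)$ because every image point has mass at least $1/q>1/(q+1)$. Your extra consistency checks $\sum_j A_j=q+1$ and $\sum_j jA_j=q$ are correct but not needed for the argument.
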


\begin{proof}
The enumerator follows directly from Theorem~\ref{thm:a0-fibres} and the fact
that $\Ocal$ has one preimage.  Every point in the image has probability at
least $1/q>1/(q+1)$, while every point outside the image has probability $0$.
Therefore
\[
  \|\mu_b-\nu_b\|_{\TV}
  =\frac{\#(E_{0,b}(\Fq)\setminus\mathcal I_{0,b})}{q+1}
  =\frac{A_0}{q+1}.
\]
Substituting $A_0=(q+1)/3+(1-\chi(-6b))/2$ gives \eqref{eq:a0-TV}.
\end{proof}

\begin{corollary}\label{cor:powers-renyi}
For every integer $k\ge1$,
\begin{equation}\label{eq:all-power-sums}
  \sum_{P\in E_{0,b}(\Fq)}\mu_b(P)^k
  =\frac{A_1+2^kA_2+3^kA_3}{q^k}.
\end{equation}
In particular,
\begin{align}
  \Col(\mu_b)=\sum_P\mu_b(P)^2
  &=\frac{2q-2-\chi(-2b)-2\chi(-6b)}{q^2},\label{eq:a0-collision}\\
  \sum_P\mu_b(P)^3
  &=\frac{5q-11-6\chi(-2b)-9\chi(-6b)}{q^3}.
     \label{eq:a0-third-moment}
\end{align}
The Renyi entropy of order $k>1$ is
\begin{equation}\label{eq:renyi}
  H_k(\mu_b)=\frac{1}{1-k}
  \log\left(\frac{A_1+2^kA_2+3^kA_3}{q^k}\right).
\end{equation}
The Shannon entropy and min-entropy are
\begin{equation}\label{eq:shannon-entropy}
  H(\mu_b)=\log q-\frac{2A_2\log 2+3A_3\log 3}{q},
\end{equation}
and
\begin{equation}\label{eq:min-entropy}
  H_\infty(\mu_b)=\log q-
  \log\bigl(\max\{r\in\{1,2,3\}:A_r>0\}\bigr).
\end{equation}
Finally,
\begin{equation}\label{eq:chi-square}
  \chi^2(\mu_b\|\nu_b)
  =(q+1)\Col(\mu_b)-1
  =\frac{q^2-2-(\beta+2\alpha)(q+1)}{q^2}.
\end{equation}

\end{corollary}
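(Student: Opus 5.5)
Everything in the statement follows from the fibre enumerator of Corollary~\ref{cor:fibre-TV}, since $\mu_b(P)=r(P)/q$ and $r(P)\in\{0,1,2,3\}$.

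\textbf{Power sums.} Grouping points by fibre size gives
\[
\sum_P\mu_b(P)^k=\sum_{j=0}^3 A_j\,(j/q)^k .
\]
For $k\ge1$ the $j=0$ term vanishes, which is \eqref{eq:all-power-sums}.

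\textbf{Collision probability and third moment.} I substitute $A_1,A_2,A_3$ explicitly. The useful identities are $\varepsilon+\delta=1$ and $\delta-\varepsilon=\alpha$. For $k=2$,
\begin{align*}
A_1+4A_2+9A_3
&=1+\tfrac{q-2-\beta}{2}-\varepsilon+4+4\beta+4\delta+\tfrac{3(q-2-3\beta)}{2}-9\delta\\
&=2q-3-\beta-\varepsilon-5\delta+\ldots
\end{align*}
Here I must be careful with the constants. Writing $\varepsilon=(1-\alpha)/2$ and $\delta=(1+\alpha)/2$, I expect this to collapse to $2q-2-\beta-2\alpha$. The cubic case $A_1+8A_2+27A_3$ is handled the same way. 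A sanity check is $A_1+2A_2+3A_3=q$ (total mass), which can be confirmed first to validate the substitutions.

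\textbf{Entropies.} Rényi entropy is just $\frac1{1-k}\log$ of the power sum. For Shannon entropy,
\[
H=-\sum_j A_j\frac jq\log\frac jq=\log q\cdot\frac{\sum_j jA_j}{q}-\frac{2A_2\log2+3A_3\log3}{q}.
\]
The first term reduces to $\log q$ by the mass identity $\sum_j jA_j=q$. Min-entropy is $-\log\max_P\mu_b(P)=\log q-\log(\max\{r:A_r>0\})$ by definition.

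\textbf{Chi-square.} Since $\nu_b=1/(q+1)$,
\[
\chi^2(\mu_b\|\nu_b)=\sum_P\frac{(\mu_b-\nu_b)^2}{\nu_b}=(q+1)\sum_P\mu_b^2-2\sum_P\mu_b+\frac{1}{q+1}\cdot(q+1),
\]
which equals $(q+1)\Col(\mu_b)-1$. Inserting \eqref{eq:a0-collision} and expanding $(q+1)(2q-2-\beta-2\alpha)-q^2$ should yield the stated numerator; I will verify this expansion.

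\textbf{Main obstacle.} There is no conceptual difficulty here. The only risk is arithmetic bookkeeping with $\varepsilon,\delta$ in the $k=2,3$ sums and in the chi-square expansion. If a constant term fails to match the displayed formula, I will recheck the computation against small cases such as $q=5$ or $11$.
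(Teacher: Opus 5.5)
Your argument matches the paper's proof: group points by fibre size via $\mu_b(P)=r(P)/q$, specialize to $k=2,3$, and apply the standard definitions for the entropies and for $\chi^2(\mu_b\|\nu_b)=(q+1)\Col(\mu_b)-1$; all of the displayed formulae check out. The one slip is your $k=2$ intermediate, whose constant term is $1-1+4-3=1$, so $A_1+4A_2+9A_3=2q+1-\beta-\varepsilon-5\delta=2q-2-\beta-2\alpha$ (using $\varepsilon+5\delta=3+2\alpha$), not $2q-3-\cdots$; the other two computations you deferred also hold, namely $A_1+8A_2+27A_3=5q-1-6\beta-\varepsilon-19\delta=5q-11-6\beta-9\alpha$ and $(q+1)(2q-2-\beta-2\alpha)-q^2=q^2-2-(\beta+2\alpha)(q+1)$.
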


\begin{proof}
A point with $r$ preimages has mass $r/q$.  Summing over the numbers $A_r$
gives \eqref{eq:all-power-sums}.  The collision and third-moment formulae
are the cases $k=2$ and $k=3$.  The entropy formulae follow from the standard
definitions.  The chi-square identity follows from
$\chi^2(\mu_b\|\nu_b)=(q+1)\sum_P\mu_b(P)^2-1$.
\end{proof}

\begin{corollary}\label{cor:batch}
Let $U_1,\ldots,U_m$ be independent uniform elements of $\Fq$.  If
\[
  C_m=\#\{1\le i<j\le m:f_{0,b}(U_i)=f_{0,b}(U_j)\},
\]
then
\begin{equation}\label{eq:expected-pair-collisions}
  \Exp C_m=\binom m2\Col(\mu_b).
\end{equation}
If
\[
  D_m=\#\{f_{0,b}(U_1),\ldots,f_{0,b}(U_m)\},
\]
then
\begin{equation}\label{eq:expected-distinct}
\Exp D_m=
\sum_{r=1}^3 A_r\left(1-\left(1-\frac rq\right)^m\right).
\end{equation}
\end{corollary}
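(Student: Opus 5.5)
Both formulae reduce to linearity of expectation over indicator variables, with the fibre data entering only through the masses $\mu_b(P)=r(P)/q$ and the counts $A_r$ of Corollary~\ref{cor:fibre-TV}.

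For \eqref{eq:expected-pair-collisions}, write
\[
  C_m=\sum_{1\le i<j\le m}\mathbf 1[f_{0,b}(U_i)=f_{0,b}(U_j)].
\]
For a fixed pair $i<j$, the outputs $f_{0,b}(U_i)$ and $f_{0,b}(U_j)$ are independent, because $U_i$ and $U_j$ are, and each has law $\mu_b$. Hence the probability that they coincide is $\sum_{P\in G}\mu_b(P)^2=\Col(\mu_b)$. Summing over the $\binom m2$ pairs gives the first formula. The closed form of $\Col(\mu_b)$ is already recorded in \eqref{eq:a0-collision}.

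For \eqref{eq:expected-distinct}, write
\[
  D_m=\sum_{P\in G}\mathbf 1\bigl[P\in\{f_{0,b}(U_1),\ldots,f_{0,b}(U_m)\}\bigr].
\]
A point $P$ is missed by all $m$ independent outputs with probability $(1-\mu_b(P))^m$. Since $\mu_b(P)=r(P)/q$, this gives
\[
  \Exp D_m=\sum_{P\in G}\left(1-\left(1-\frac{r(P)}q\right)^m\right).
\]
Now group the points by their fibre size $r(P)\in\{0,1,2,3\}$. Points with $r=0$ contribute $0$, and for $r=1,2,3$ there are exactly $A_r$ points. This yields \eqref{eq:expected-distinct}.

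There is essentially no obstacle here. The only points needing care are bookkeeping:
\begin{itemize}
  \item $\Ocal$ must be counted in $A_1$, since $r(\Ocal)=1$ by Theorem~\ref{thm:a0-fibres}, so that the sum runs over all of $G$;
  \item the independence of the outputs follows from the independence of the inputs, because they are obtained by applying the fixed function $f_{0,b}$.
\end{itemize}
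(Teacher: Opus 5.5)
Your proof is correct. The paper states this corollary without a written proof, as an immediate consequence of the fibre enumerator, and your argument is exactly the intended one: linearity of expectation over pair-collision indicators and point-hit indicators, with $\mu_b(P)=r(P)/q$ and the points grouped by fibre size into the counts $A_r$. You also correctly include $\mathcal O$ in $A_1$.
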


\section{Pairing characters and Fourier energy}
\label{sec:pairing-characters}

The preceding formulae determine not only point probabilities, but also the
second moment of all character sums associated with the output distribution.
For elliptic curves, these characters may be realized through pairings.  In the
present exceptional family this interpretation has a concrete pairing-friendly
meaning: the curve is supersingular and its odd prime-order subgroups have
embedding degree two.

Let
\[
  G=E_{0,b}(\Fq),\qquad N=\#G=q+1.
\]

\begin{proposition}\label{prop:k2-pairing-tests}
Under the hypotheses of Theorem~\ref{thm:a0-fibres}, put
\(N=\#E_{0,b}(\Fq)=q+1\).  Then \(E_{0,b}\) is supersingular and
\(N\mid q^2-1\).  Hence \(\mu_N\subset \mathbb F_{q^2}^*\).  Moreover, for every
odd prime divisor \(r\mid N\), the embedding degree of \(E_{0,b}\) with respect
to \(r\) is exactly \(2\).

The Weil pairing
\[
  e_N:E_{0,b}[N]\times E_{0,b}[N]\longrightarrow \mu_N
\]
is perfect.  After fixing an embedding of \(\mu_N\) in the complex roots of
unity, the rule
\[
  Q\longmapsto \psi_Q,
  \qquad
  \psi_Q(P)=e_N(P,Q),\quad P\in G,
\]
induces an isomorphism
\[
  E_{0,b}[N]/G^\perp \simeq \widehat G,
  \qquad
  G^\perp=\{Q\in E_{0,b}[N]:e_N(P,Q)=1\text{ for all }P\in G\}.
\]
Thus all Fourier characters of \(G\) are realized by Weil-pairing tests; on the
odd prime-order components these are embedding degree-two pairing tests.
\end{proposition}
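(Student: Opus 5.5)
The argument splits into three parts: supersingularity and the embedding degree, perfectness of the Weil pairing, and the identification of $E_{0,b}[N]/G^\perp$ with $\widehat G$.

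\emph{Supersingularity and embedding degree.} By \eqref{eq:a0-cardinality}, $\#E_{0,b}(\Fq)=q+1$, so the trace of Frobenius is $t=q+1-N=0$. Since $t\equiv0\pmod p$, the curve is supersingular by the standard criterion.

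From $q^2-1=(q-1)(q+1)$ we get $N\mid q^2-1$. Hence $\mu_N\subset\mathbb F_{q^2}^*$, because $\mathbb F_{q^2}^*$ is cyclic of order $q^2-1$.

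Now let $r$ be an odd prime dividing $N$. The embedding degree is the order of $q$ modulo $r$. From $q\equiv-1\pmod r$ we get $q^2\equiv1$. On the other hand $q\not\equiv1\pmod r$, since otherwise $r\mid 2$, contradicting that $r$ is odd. So the order is exactly $2$. We should also note $r\ne p$, which holds because $r\mid q+1$.

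\emph{Perfectness of the Weil pairing.} Here $N=q+1$ is coprime to $p$. Perfectness of $e_N$ on $E[N]\cong(\mathbb Z/N)^2$ is then the standard theorem (Silverman III.8.1), applied over $\overline{\Fq}$. Since $\#G=N$, we also have $G\subseteq E_{0,b}[N]$.

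\emph{The isomorphism $E_{0,b}[N]/G^\perp\simeq\widehat G$.} This is the step needing an actual argument.

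For each $Q$, the map $\psi_Q$ is a character of $G$ by bilinearity, and $Q\mapsto\psi_Q$ is a homomorphism $E[N]\to\widehat G$. Its kernel is $G^\perp$ by definition.

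For surjectivity, consider the restriction map $\widehat{E[N]}\to\widehat G$. It is surjective, since characters of a subgroup of a finite abelian group extend.

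Perfectness says exactly that $Q\mapsto e_N(\cdot,Q)$ is an isomorphism $E[N]\to\operatorname{Hom}(E[N],\mu_N)$. Fixing the embedding $\mu_N\hookrightarrow\mathbb C^*$ identifies $\operatorname{Hom}(E[N],\mu_N)$ with $\widehat{E[N]}$, since $E[N]$ has exponent $N$.

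Composing these gives surjectivity onto $\widehat G$, and the first isomorphism theorem finishes the proof.

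As a consistency check, $\#G^\perp=N^2/N=N$. The remark about odd prime-order components follows by restricting to $r$-primary parts and using the embedding degree computation above.
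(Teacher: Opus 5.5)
Your proof is correct and follows the paper's outline. You use $N=q+1$ to get $G\subseteq E_{0,b}[N]$, $N\mid q^2-1$, and embedding degree $2$ from $q\equiv-1\pmod r$; you get perfectness from $p\nmid N$; and you obtain the quotient isomorphism from the annihilator argument, which you spell out (kernel $G^\perp$, surjectivity by extending characters from $G$ to $E_{0,b}[N]$) where the paper only cites it.

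The one real difference is supersingularity. You deduce it from the trace $t=q+1-N=0\equiv0\pmod p$ via the point count already established. The paper instead invokes the $j=0$ criterion after noting that $q\equiv2\pmod3$ forces $p\equiv2\pmod3$. Both are standard and valid.
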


\begin{proof}
Equation~\eqref{eq:a0-cardinality} gives \(N=\#E_{0,b}(\Fq)=q+1\).  Therefore
every point of \(G\) is annihilated by \(N\), and \(N\mid q^2-1\).  Since
\(\mathbb F_{q^2}^*\) is cyclic of order \(q^2-1\), it contains \(\mu_N\).  If
\(r>2\) is a prime divisor of \(N\), then \(q\equiv -1\pmod r\), and hence the
multiplicative order of \(q\) modulo \(r\) is exactly \(2\).

The supersingularity assertion is the standard criterion for the \(j=0\) family \cite[Chapter V]{SilvermanAEC}
in characteristic at least \(5\): curves of the form \(Y^2=X^3+b\) are
supersingular when the characteristic is $\equiv 2 \pmod 3.$  Here
\(q=p^n\equiv2\pmod3\) implies \(p\equiv2\pmod3\) and \(n\) is odd.  Finally,
the Weil pairing is perfect on \(E_{0,b}[N]\) because \(p\nmid N\)
\cite[Chapter III, Section 8]{SilvermanAEC}.  The displayed quotient isomorphism
is the usual annihilator statement for a perfect pairing.
\end{proof}

This pairing interpretation identifies the Fourier energy below with an exact
second moment over concrete Weil pairing character tests.

For a character \(\psi\in\widehat G\), put
\[
  B_\psi=\sum_{u\in\Fq}\psi(f_{0,b}(u)).
\]
The trivial character gives \(B_1=q\).

\begin{theorem}\label{thm:pairing-energy}
Under the hypotheses of Theorem~\ref{thm:a0-fibres},
\begin{equation}\label{eq:pairing-energy}
  \sum_{\psi\ne1}|B_\psi|^2
  =q^2-2-\bigl(\chi(-2b)+2\chi(-6b)\bigr)(q+1).
\end{equation}
Equivalently, choosing one representative $Q$ in each nonzero coset of
$E_{0,b}[N]/G^\perp$, one has the Weil-pairing form
\begin{equation}\label{eq:weil-energy}
  \sum_{\overline Q\ne0}
  \left|\sum_{u\in\Fq} e_N(f_{0,b}(u),Q)\right|^2
  =q^2-2-\bigl(\chi(-2b)+2\chi(-6b)\bigr)(q+1).
\end{equation}
\end{theorem}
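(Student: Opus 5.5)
The plan is to use Parseval's identity on the finite abelian group $G=E_{0,b}(\Fq)$ of order $N=q+1$. Write $r(P)=\#f_{0,b}^{-1}(P)$, so that $B_\psi=\sum_{P\in G}r(P)\psi(P)$; this is the (unnormalized) Fourier transform of the fibre function $r$. Parseval for $G$ gives
\[
  \sum_{\psi\in\widehat G}|B_\psi|^2=N\sum_{P\in G}r(P)^2 .
\]
The right-hand side is already determined by the fibre enumerator of Corollary~\ref{cor:fibre-TV}:
\[
  \sum_P r(P)^2=A_1+4A_2+9A_3=q^2\Col(\mu_b).
\]
By \eqref{eq:a0-collision} this equals $2q-2-\beta-2\alpha$, with $\alpha=\chi(-6b)$ and $\beta=\chi(-2b)$.

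Next I remove the trivial character, which contributes $B_1^2=q^2$. This leaves
\[
  \sum_{\psi\ne1}|B_\psi|^2=(q+1)(2q-2-\beta-2\alpha)-q^2 .
\]
Expanding, $(q+1)(2q-2)-q^2=2q^2-2-q^2=q^2-2$. The result is therefore $q^2-2-(\beta+2\alpha)(q+1)$, which is \eqref{eq:pairing-energy}. As a consistency check, this equals $q^2\chi^2(\mu_b\|\nu_b)$ from \eqref{eq:chi-square}, as it must, since the nontrivial Fourier energy of $\mu_b$ is $N\sum_P(\mu_b(P)-1/N)^2$.

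For \eqref{eq:weil-energy}, I invoke Proposition~\ref{prop:k2-pairing-tests}. The map $Q\mapsto\psi_Q$ induces an isomorphism $E_{0,b}[N]/G^\perp\simeq\widehat G$. Hence choosing one representative $Q$ per coset enumerates every character of $G$ exactly once. The zero coset corresponds exactly to the trivial character, so the nonzero cosets are in bijection with the nontrivial characters. Moreover, $B_{\psi_Q}=\sum_u e_N(f_{0,b}(u),Q)$, and this sum does not depend on the representative chosen, because $e_N(P,Q')=e_N(P,Q)$ whenever $Q'-Q\in G^\perp$ and $P\in G$. The Weil-pairing sum is thus a reindexing of the left side of \eqref{eq:pairing-energy}.

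There is no real obstacle. The only care needed is bookkeeping: $A_1$ must include $\Ocal$ (coming from $u=0$), so that $\sum_P r(P)=q$ and $B_1=q$ hold consistently. The collision formula must be taken with exactly this convention, which Corollary~\ref{cor:powers-renyi} already does.
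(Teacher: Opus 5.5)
Your proof is correct and is essentially the paper's argument: you apply Parseval on $G$ to the fibre function $r$, where the paper uses the normalized $\widehat\mu_b=B_\psi/q$ and $\Col(\mu_b)$, the same computation up to a factor $q^2$; you then remove the trivial character's contribution $q^2$ and reindex through $E_{0,b}[N]/G^\perp\simeq\widehat G$ to get the Weil-pairing form. Your remarks that the sum does not depend on the coset representative and that the zero coset corresponds to the trivial character spell out what the paper leaves implicit.
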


\begin{proof}
Let
\[
  \widehat\mu_b(\psi)=\sum_{P\in G}\mu_b(P)\psi(P)=\frac{B_\psi}{q}.
\]
Plancherel's identity for the finite abelian group $G$ gives
\[
  \sum_{\psi\in\widehat G}|\widehat\mu_b(\psi)|^2
  =\#G\sum_{P\in G}\mu_b(P)^2=(q+1)\Col(\mu_b).
\]
Since $\widehat\mu_b(1)=1$,
\[
  \sum_{\psi\ne1}|B_\psi|^2
  =q^2\bigl((q+1)\Col(\mu_b)-1\bigr).
\]
Substitution of \eqref{eq:a0-collision} gives \eqref{eq:pairing-energy}; the
Weil-pairing form follows from the realization of characters above.
\end{proof}

\begin{corollary}\label{cor:forced-sqrt-scale}
With $\alpha=\chi(-6b)$ and $\beta=\chi(-2b)$,
\begin{equation}\label{eq:average-energy}
  \frac1q\sum_{\psi\ne1}|B_\psi|^2
  =q-(\beta+2\alpha)-\frac{2+\beta+2\alpha}{q}.
\end{equation}
In particular, some nontrivial character satisfies
\[
  |B_\psi|\ge
  \left(q-(\beta+2\alpha)-\frac{2+\beta+2\alpha}{q}\right)^{1/2}.
\]
Thus the $\sqrt q$ scale in character-sum estimates for Icart's map is forced
on average by the exact one-output distribution.
\end{corollary}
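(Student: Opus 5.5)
The plan is to derive \eqref{eq:average-energy} directly from Theorem~\ref{thm:pairing-energy}, and then get the lower bound from an averaging (pigeonhole) argument over the nontrivial characters.

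\textbf{Step 1: the average.} Divide \eqref{eq:pairing-energy} by $q$ and write $q+1=q\cdot(1+1/q)$. Then
\[
  \frac1q\sum_{\psi\ne1}|B_\psi|^2
  =q-\frac2q-(\beta+2\alpha)\Bigl(1+\frac1q\Bigr)
  =q-(\beta+2\alpha)-\frac{2+\beta+2\alpha}{q},
\]
which is \eqref{eq:average-energy}. This step is pure algebra.

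\textbf{Step 2: the lower bound.} The group $\widehat G$ has exactly $N=q+1$ elements, so there are $q$ nontrivial characters. Hence the left-hand side of \eqref{eq:average-energy} is exactly the arithmetic mean of $|B_\psi|^2$ over $\psi\ne1$. By pigeonhole, some nontrivial $\psi$ has $|B_\psi|^2$ at least this mean.

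To take square roots legitimately, I must check that the mean is nonnegative. Since $|\beta+2\alpha|\le3$, the mean is at least $q-3-5/q$. This is positive because $q\ge5$; in fact $q\equiv2\pmod 3$ forces $q\ge5$, and characteristic $\ge5$ also suffices. Taking square roots then gives the displayed bound.

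\textbf{Step 3: interpretation.} The final sentence follows from Step 2. Some nontrivial character sum has absolute value at least $\sqrt{q}\,(1-O(1/q))^{1/2}$, so no bound of the form $|B_\psi|=o(\sqrt q)$ can hold uniformly. In Weil-pairing language, via Proposition~\ref{prop:k2-pairing-tests}, the same statement holds for some pairing test $e_N(\cdot,Q)$.

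\textbf{Main obstacle.} There is no real obstacle; the argument is bookkeeping. The one point to get right is the count of nontrivial characters, $\#\widehat G-1=q$. This count is what makes division by $q$ an average rather than merely a normalisation.
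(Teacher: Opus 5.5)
Your proposal is correct and is the argument the paper intends; the paper gives no separate proof and treats the corollary as immediate from Theorem~\ref{thm:pairing-energy}. You divide that identity by $q$, observe that $\widehat G$ has exactly $q$ nontrivial characters so the result is an average, and pick a character at least as large as the mean; your extra positivity check is unnecessary (a mean of the numbers $|B_\psi|^2$ is automatically nonnegative), and note that $q\equiv2\pmod3$ alone does not force $q\ge5$ (take $q=2$) --- it is the characteristic hypothesis that does.
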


\section{Two-output smoothing}
\label{sec:two-output}

The exact formula \eqref{eq:a0-TV} shows that one Icart output is far from
uniform.  The standard character-sum argument explains why adding two
independent outputs smooths the distribution.  The character-sum framework of \cite{FFSTV} gives, for Icart's map,
\begin{equation}\label{eq:FFSTV-bound}
  |S_f(\psi)|\le B_q:=12\sqrt q+3,
  \qquad
  S_f(\psi)=\sum_{u\in\Fq}\psi(f_{0,b}(u)),
\end{equation}
for every nontrivial character $\psi$ of $E_{0,b}(\Fq)$ \cite{FFSTV}.
Combining this estimate with Theorem~\ref{thm:pairing-energy} gives the
following explicit form of the usual two-output smoothing bound.

\begin{proposition}\label{prop:two-output}
Let $U_1,U_2$ be independent uniform elements of $\Fq$, and let
\[
  Z=f_{0,b}(U_1)+f_{0,b}(U_2).
\]
Then
\begin{equation}\label{eq:two-output-bound}
  \|\mathcal L(Z)-\nu_b\|_{\TV}
  \le
  \frac{12\sqrt q+3}{2q^2}
  \sqrt{q^2-2-(\beta+2\alpha)(q+1)}.
\end{equation}
In particular,
\[
  \|\mathcal L(Z)-\nu_b\|_{\TV}=O(q^{-1/2}).
\]
\end{proposition}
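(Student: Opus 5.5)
We argue through the Fourier transform on the finite abelian group $G=E_{0,b}(\Fq)$, in the standard Cauchy--Schwarz and Plancherel form. Since $U_1,U_2$ are independent, $\mathcal L(Z)=\mu_b*\mu_b$, and therefore $\widehat{\mathcal L(Z)}(\psi)=\widehat\mu_b(\psi)^2=B_\psi^2/q^2$ for every $\psi\in\widehat G$. The uniform distribution has transform $\widehat\nu_b(\psi)=0$ for $\psi\ne1$ and $\widehat\nu_b(1)=1$, so the trivial character drops out of the difference.

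The first step bounds total variation by the $\ell^2$ norm. Cauchy--Schwarz gives
\[
2\|\mathcal L(Z)-\nu_b\|_{\TV}=\sum_{P\in G}|\mathcal L(Z)(P)-\nu_b(P)|\le \sqrt{N}\Bigl(\sum_P|\mathcal L(Z)(P)-\nu_b(P)|^2\Bigr)^{1/2},
\]
with $N=q+1$. Plancherel, in the normalization already used in the proof of Theorem~\ref{thm:pairing-energy}, turns the sum of squares into $\frac1N\sum_{\psi\ne1}|B_\psi|^4/q^4$. The factors of $N$ cancel, leaving
\[
\|\mathcal L(Z)-\nu_b\|_{\TV}\le\frac{1}{2q^2}\Bigl(\sum_{\psi\ne1}|B_\psi|^4\Bigr)^{1/2}.
\]

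The second step estimates $\sum_{\psi\ne1}|B_\psi|^4\le \max_{\psi\ne1}|B_\psi|^2\cdot\sum_{\psi\ne1}|B_\psi|^2$. The maximum is controlled by the bound \eqref{eq:FFSTV-bound}, namely $|B_\psi|=|S_f(\psi)|\le 12\sqrt q+3$. The second factor is computed exactly by \eqref{eq:pairing-energy}. Taking square roots gives precisely \eqref{eq:two-output-bound}.

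For the final $O(q^{-1/2})$ claim, note that the square root is at most $\sqrt{q^2+3(q+1)}=q+O(1)$. Hence the right-hand side is at most $(12\sqrt q+3)(q+O(1))/(2q^2)=O(q^{-1/2})$.

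The main point to check is the normalization constant in Plancherel: it must be the one under which the $\sqrt N$ from Cauchy--Schwarz cancels exactly, so that no spurious factor of $(q+1)$ survives. I would verify this against the identity $\sum_\psi|\widehat\mu_b(\psi)|^2=N\sum_P\mu_b(P)^2$ used earlier. That identity implies $\sum_P|g(P)|^2=\frac1N\sum_\psi|\widehat g(\psi)|^2$ for $g=\mathcal L(Z)-\nu_b$, which is the form used above. Everything else is a direct substitution of results already proved.
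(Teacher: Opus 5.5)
Your proposal is correct and follows the paper's argument. You use Cauchy--Schwarz plus Plancherel to get $\|\mathcal L(Z)-\nu_b\|_{\TV}\le\frac12\bigl(\sum_{\psi\ne1}|\widehat\mu_b(\psi)|^4\bigr)^{1/2}$, then bound the fourth moment by the maximum from \eqref{eq:FFSTV-bound} times the exact energy \eqref{eq:pairing-energy}, and your normalization check and $O(q^{-1/2})$ verification spell out steps the paper leaves implicit.
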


\begin{proof}
For $\widehat\mu_b(\psi)=S_f(\psi)/q$, the Fourier transform of
$\mu_b*\mu_b$ is $\widehat\mu_b(\psi)^2$.  By Plancherel and Cauchy's
inequality on $G$,
\[
  \|\mu_b*\mu_b-\nu_b\|_{\TV}
  \le \frac12
  \left(\sum_{\psi\ne1}|\widehat\mu_b(\psi)|^4\right)^{1/2}.
\]
Using \eqref{eq:FFSTV-bound} for one factor and Theorem~\ref{thm:pairing-energy}
for the other gives \eqref{eq:two-output-bound}.
\end{proof}

\begin{remark}
Proposition~\ref{prop:two-output} is not a new indifferentiability theorem.  It
is the standard character-sum smoothing argument specialized with the exact
one-output energy computed here.
\end{remark}

\section{Related work and scope}
\label{sec:related-work}

The construction of deterministic points on elliptic curves over finite fields
was studied by Skalba \cite{Skalba05}, Shallue--van de Woestijne \cite{SW06},
Icart \cite{Icart}, and in hyperelliptic settings by
Kammerer--Lercier--Renault \cite{KLR10}.  Image sizes and distributional
properties of such maps were studied by Farashahi--Shparlinski--Voloch
\cite{FSV}, Fouque--Tibouchi \cite{FT10}, and in special models such as
Hessian curves by Farashahi \cite{FarashahiHessian}.

The indifferentiable-hashing line begins, for our purposes, with the Icart-based
construction of Brier et al. \cite{BCIMRT} and the character-sum framework of \cite{FFSTV}.  Practical
standardized hash-to-curve suites use SWU-type maps, isogenies, domain
separation and cofactor clearing, as in RFC 9380 \cite{RFC9380}; see also
Fouque--Tibouchi for Barreto--Naehrig curves \cite{FT12} and Wahby--Boneh for
BLS12-381 \cite{WB19}.  Weil and Tate--Lichtenbaum pairings are standard tools
in elliptic-curve cryptography \cite{SilvermanAEC,GPS08}.  Zheng, Lin and Zhao
recently optimized pairing computations on curves with embedding degree two via
biextensions \cite{ZhengLinZhao2026}.  In this paper, however, pairings are used
only to realize group characters in the Fourier analysis of the one-output
distribution.  Other related but distinct goals include
indistinguishable encodings such as Elligator \cite{Elligator} and injective
encodings \cite{FJT13}.

Recent work on one-exponent or one-root maps, including SwiftEC \cite{SwiftEC}
and Koshelev's constructions for ordinary $j=0$ and $j=1728$ curves
\cite{KoshelevJ0,Koshelev1728}, gives useful context.  These maps have a
higher-dimensional source and are designed for near-uniform admissible hashing.
They are not direct competitors to the exact enumeration in this paper.  The
present results concern one classical map $\Fq\to E(\Fq)$ before cofactor
clearing.

\end{document}